\theoremstyle{change}
\newtheorem{definition}{Definition:}[section]
\newtheorem{proposition}[definition]{Proposition:}
\newtheorem{theorem}[definition]{Theorem:}
\newtheorem{corollary}[definition]{Corollary:}
{\theorembodyfont{\rmfamily}
\newtheorem{remark}[definition]{Remark:}
}
{\theorembodyfont{\rmfamily}

}
\newenvironment{proof}
  {{\bf Proof:}}
  {\qquad \hspace*{\fill} $\Box$}
\newcommand{\fa}{\mathfrak{a}}
\newcommand{\fg}{\mathfrak{g}}
\newcommand{\fk}{\mathfrak{k}}
\newcommand{\fn}{\mathfrak{n}}
\newcommand{\ad}{\operatorname{ad}}
\newcommand{\inner}{\operatorname{int}}
\newcommand{\rme}{\mathrm{e}}
\newcommand{\SC}{\mathcal{S}}
\newcommand{\UC}{\mathcal{U}}
\newcommand{\XC}{\mathcal{X}}
\newcommand{\AC}{\mathcal{A}}
\newcommand{\DC}{\mathcal{D}}
\newcommand{\N}{\mathbb{N}}
\newcommand{\R}{\mathbb{R}}
\begin{document}

\title{A semigroup associated to a linear control system on a Lie group}
\author{V\'{\i}ctor Ayala\thanks{%
Supported by Proyecto Fondecyt n$%
{{}^\circ}%
$ 1150292. Conicyt, Chile.} \\
Instituto de Alta Investigaci\'{o}n\\
Universidad de Tarapac\'{a}\\
Casilla 7D, Arica, Chile \and Adriano Da Silva\thanks{Supported by Fapesp grant n${{}^\circ}$ 2016/11135-2.}\\
Instituto de Matem\'{a}tica\\
Universidade Estadual de Campinas\\
Cx. Postal 6065, 13.081-970 Campinas-SP, Brasil}
\date{\today}
\maketitle

\begin{abstract}
Let us consider a linear control system $\Sigma $ on a connected Lie group $%
G $. It is known that the accessibility set $\mathcal{A}$ from the identity $%
e$ is in general not a semigroup. In this article we associate a new
algebraic object $\mathcal{S}_{\Sigma }$ to $\Sigma $ which turns out to be
a semigroup, allowing the use of the semigroup machinery to approach $\Sigma$ In particular, we obtain some controllability results.
\end{abstract}

\textbf{Keywords: } linear control systems, semigroups, controllability, Lie groups.

\textbf{AMS 2010 subject classification}: 93C05, 93B05, 22E30. 

\section{Introduction\qquad}
The concept of linear control systems on Lie groups was introduced in \cite{VAJT} by Ayala and Tirao as the family of differential equations
\begin{equation}
\Sigma :\dot{g}(t)=\mathcal{X}(g(t))+\sum_{j=1}^{m}u_{j}(t)\text{ }%
X^{j}(g(t)),  \label{Linear system}
\end{equation}
where $X^{j}$ are right-invariant vector fields, $u\in \mathcal{U}\subset L^{\infty }(\mathbb{R},\Omega \subset \mathbb{R}^{m})$ is the class of
admissible controls, with $\Omega\subset\mathbb{R}^m$ a compact, convex subset satisfying $0\in \mathrm{int}\Omega$ and the drift $\XC$ is a linear vector field, that is, its associated flow $(\varphi _{t})_{t\in \mathbb{R}}$ is a 1-parameter subgroup of the group of the automorphisms $\mathrm{Aut}(G)$ of $G$. 

The study of linear control systems is important for at least two reasons: First, it is very well known that the classical linear system on the Euclidean space $\mathbb{R}^{d}$ given by 
$$\dot{x}(t)=Ax(t)+Bu(t), \;\;A\in\R^{d\times d}, B\in\R^{d\times m}\mbox{ and }u\in\UC$$
is one of the most relevant control systems. Such system can be written as 
\begin{equation*}
\dot{x}(t)=Ax(t)+\sum_{j\text{ }=1}^{m}u_{j}(t)\text{ }b^{j}, \mbox{ where } b^{j}\in \mathbb{R}^{d} \mbox{ are the columns of } B.
\end{equation*}%
Moreover, the flow associated with $A$ satisfies $e^{tA}\in GL^{+}(d, \mathbb{R)}=Aut(\mathbb{R}^{d})$ and, since $\mathbb{R}^{d}$ is a commutative Lie group, any constant vector $b^{j}$ is a right-invariant vector field, showing that the notion of linear systems on Lie groups is a generalization of linear systems on Euclidean spaces.

Secondly, in \cite{JPh1} Jouan shows that any control-affine system on a connected manifold $M$ whose dynamic generates a finite Lie algebra is diffeomorphic to a linear control system on a Lie group or on a homogeneous space, showing that the understanding of the behavior of the system $\Sigma$ is in fact very important in applications. 

One of the ways to analyze the dynamical behavior of control systems on Lie groups or homogeneous spaces is via semigroup theory by relating the solutions of the system with the action of a semigroup. For instance, for right-invariant systems the reachable set from the identity of the group is a semigroup and so all the machinery of the semigroup theory is available (see \cite{JuSu}). Different from right-invariant systems, the reachable set of a linear control system on a nonabelian Lie group cannot be a semigroup unless it is the whole group (see \cite{VASM}, Section 4). In this paper we show that, despite this fact, the reachable set strictly contains a semigroup that is intrinsically connected with the controllability properties of the system.

The paper is structured as follows: In Section 2 we introduce some $G$-subgroups associated to a given linear vector field. We define the reachable set of a linear system and show the relation between this set with the subgroups induced by its drift. In Section 3 we define the semigroup associated to a given linear system. In order to assure controllability of the system we show that it is enough to analyze the semigroup. Morever, if we assume that the reachable set is open our work reduces the problem from an arbitrary Lie group $G$ to a nilpotent $G$-subgroup. At the end of the section we show that for semisimple Lie groups the mentioned semigroup has nonempty interior if and only if it is the whole group if and only if the associated linear control system is controllable.

\section{Preliminaries}

In this section we give the background needed about linear control systems and group decomposition induced by linear vector field.

Let us assume from here on that $G$ is a connected Lie group with Lie algebra $\mathfrak{g}$ and dimension $d$, where $\fg$ is identified with the set of the right-invariant vector fields. For any linear vector field $\XC$ on $G$ let $\DC$ be the $\fg$-derivation determined by $\mathcal{X}$ (see for instance \cite{JPh}). The relation between $\DC$ and the flow $(\varphi_{t})_{t\in\R}$ of $\XC$ is given by  
$$(d\varphi_t)_e=\rme^{t\DC}, \;\;t\in\R.$$

Consider the complexification $\fg_{\mathbb{C}}$ of $\fg$ and the derivation $\DC_{\mathbb{C}}$ on $\fg_{\mathbb{C}}$ induced by $\DC$. For an eigenvalue $\alpha$ of $\DC$ we defined the $\alpha$-generalized eigenspace of $\DC$ in $\fg$ as 
$$\fg_{\alpha}:=\left(\fg_{\mathbb{C}}\right)_{\alpha}\cap\fg$$
where \begin{equation*}
(\fg_{\mathbb{C}})_{\alpha}=\{X\in \mathfrak{g}_{\mathbb{C}}:(\mathcal{D}_{\mathbb{C}}-\alpha)^{n}X=0\mbox{
for some }n\geq 1\}
\end{equation*}
is the $\alpha$-generalized eigenspace of $\DC_{\mathbb{C}}$ in $\fg_{\mathbb{C}}$. Since the eigenvalues of $\DC_{\mathbb{C}}$ coincides with the ones of $\DC$ and $\fg$ is $\DC_{\mathbb{C}}$-invariant, we have that any $\fg_{\alpha}$ is a $\DC$-invariant subspace of $\fg$ and $$\fg=\bigoplus_{\alpha}\fg_{\alpha}, \;\;\mbox{ for every }\alpha \mbox{ eigenvalue of }\DC.$$
Moreover, if $\beta$ is also an eigenvalue of $\DC$ Proposition 3.1 of \cite{SM1} implies
\begin{equation*}
[ (\fg_{\mathbb{C}})_{\alpha }, (\fg_{\mathbb{C}})_{\beta }]\subset (\fg_{\mathbb{C}})_{\alpha +\beta } \;\;\;\;\mbox{ and so } \;\;\;\;[ \fg_{\alpha }, \fg_{\beta }]\subset \fg_{\alpha +\beta }
\end{equation*}
where $(\fg_{\mathbb{C}})_{\alpha+\beta}=\fg_{\alpha+\beta}=\{0\}$ when $\alpha +\beta$ in not an eigenvalue of $\DC$.

 Therefore, $\mathfrak{g}$ decomposes as 
\begin{equation*}
\mathfrak{g}=\mathfrak{g}^{+}\oplus \mathfrak{g}^{-}\oplus \mathfrak{g}^{0}
\end{equation*}%
where 
\begin{equation*}
\mathfrak{g}^{+}=\bigoplus_{\alpha \;:\,\mathrm{Re}(\alpha )>0}\mathfrak{g}%
_{\alpha },\hspace{0.8cm}\mathfrak{g}^0=\bigoplus_{\alpha\;:\,\mathrm{Re%
}(\alpha)=0}\mathfrak{g}_{\alpha}\hspace{0.8cm}\mbox{ and }\hspace{0.8cm}%
\mathfrak{g}^{-}=\bigoplus_{\alpha \;:\,\mathrm{Re}(\alpha)<0}\mathfrak{g}%
_{\alpha}.
\end{equation*}%
The subspaces $\mathfrak{g}^{+},$ $\mathfrak{g}^{0}$ and $\mathfrak{g}^{-}$ are Lie algebras and $\mathfrak{g}^{+},$ $\mathfrak{g}^{-}$ are nilpotent. Moreover, $\fg^+$ and $\fg^-$ are ideals of the Lie subalgebras $\fg^{+, 0}:=\fg^+\oplus\fg^0$ and $\fg^{-, 0}:=\fg^-\oplus\fg^0$, respectively. Let us denote by, $G^{+}, G^{0}$, $G^{-}$, $G^{+, 0}$ and $G^{-, 0}$ the connected Lie subgroups of $G$ with Lie algebras $\mathfrak{g}^{+},$ $%
\mathfrak{g}^0$, $\mathfrak{g}^-$, $\fg^{+, 0}$ and $\fg^{-, 0}$ respectively.

By Proposition 2.9 of \cite{DaSilva} the subgroups $G^{+}, G^{0}$, $G^{-}$, $G^{+, 0}$ and $G^{-, 0}$ are closed subgroups of $G$ that are invariant by the flow $\left(\varphi_t\right)_{t\in\R}$.

From here, we focus in a linear control system $\Sigma $ on a connected Lie group $G$ as follows 
\begin{equation*}
\Sigma :\dot{g}(t)=\mathcal{X}(g(t))+\sum_{j\text{ }%
=1}^{m}u_{j}(t)X^{j}(g(t)),
\end{equation*}%
where $\mathcal{X}$ is a linear vector field, $X^{j}$ are right-invariant vector fields and $u\in\UC$. For any $g\in G$, $u\in\UC$ and $\tau \in \mathbb{R}$ we denote by $\phi _{\tau ,u}(g)$ the solution of $\Sigma$ at time $\tau$ with initial condition $g$ and control $u$. If $\tau > 0$ 
\begin{equation}
\mathcal{A}_{\tau }(g):=\{\phi _{\tau ,u}(g):u\in \mathcal{U}\}\;\;\;\text{%
and}\;\;\;\mathcal{A}(g):=\bigcup_{\tau >0}\mathcal{A}_{\tau }(g),
\label{reachablesets}
\end{equation}%
are the \textit{reachable set from }$g$\textit{\ at time }$\tau $ and the \textit{reachable set of }$g$, respectively. We also denote $\mathcal{A}_{\tau }(e)=\mathcal{A}%
_{\tau }$ and $\mathcal{A}(e)=\mathcal{A}$. We will say that the system $\Sigma$ satisfies the {\it rank}-condition if 
$$\mathrm{span}_{\mathcal{L}}\{\DC^i(X^j), i\in\N_0, j=1, \ldots, m\}=\fg,$$
that is, if the smallest Lie subalgebra of $\fg$ that is $\DC$-invariant coincides with $\fg$.

The next proposition states the main properties of the reachable sets. Its
proof can be found in \cite{JPh} Proposition 2.

\begin{proposition}
It holds:
\end{proposition}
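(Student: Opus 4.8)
The plan is to reduce every assertion to the single \emph{factorization} (variation-of-constants) identity characteristic of linear systems, namely that for all $\tau>0$, $g\in G$ and $u\in\UC$,
$$\phi_{\tau,u}(g)=\phi_{\tau,u}(e)\,\varphi_{\tau}(g).$$
Once this is available, the statements about the reachable sets $\AC_{\tau}$ and $\AC$ become formal consequences. I would establish the identity by uniqueness of solutions: fix $u$ and $g$, set $p(t):=\phi_{t,u}(e)$ and $q(t):=\varphi_{t}(g)$, and show that $t\mapsto p(t)q(t)$ solves $\Sigma$ with initial condition $g$. Since $p(0)q(0)=e\,\varphi_{0}(g)=g$, it then suffices to check the differential equation.

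The derivative computation is the heart of the matter. Writing $\frac{\rmd}{\rmd t}\big(p(t)q(t)\big)=(\rmd R_{q(t)})\dot p(t)+(\rmd L_{p(t)})\dot q(t)$, I would invoke two structural facts. First, right-invariance of each $X^{j}$ gives $(\rmd R_{q})X^{j}(p)=X^{j}(pq)$, which carries the control terms of $\dot p$ over to $pq$ unchanged. Second, and crucially, because the flow $(\varphi_{t})$ consists of automorphisms one has $\varphi_{t}(pq)=\varphi_{t}(p)\varphi_{t}(q)$; differentiating at $t=0$ yields the Leibniz-type identity
$$\XC(pq)=(\rmd R_{q})\XC(p)+(\rmd L_{p})\XC(q).$$
As $q(t)=\varphi_{t}(g)$ is an integral curve of $\XC$ we have $\dot q(t)=\XC(q(t))$, so these two identities combine to give precisely $\XC(p(t)q(t))+\sum_{j}u_{j}(t)X^{j}(p(t)q(t))$. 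Hence $p(t)q(t)$ and $\phi_{t,u}(g)$ satisfy the same equation with the same initial value, and uniqueness forces them to agree.

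With the factorization in hand the remaining properties follow without difficulty. Translating on the right by $\varphi_{\tau}(g)$ gives $\AC_{\tau}(g)=\AC_{\tau}\,\varphi_{\tau}(g)$, so $\AC_{\tau}=\AC_{\tau}(e)$ appears as a universal left factor. The concatenation law $\AC_{\tau+s}=\AC_{\tau}\,\varphi_{\tau}(\AC_{s})$ is obtained by splitting an admissible control $u$ at time $s$ into its restriction $v=u|_{[0,s]}$ and its time-$s$ shift $w(t)=u(t+s)$, using the cocycle property $\phi_{\tau+s,u}(e)=\phi_{\tau,w}(\phi_{s,v}(e))$, and then factoring the outer solution as $\phi_{\tau,w}(e)\,\varphi_{\tau}(\phi_{s,v}(e))$. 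Connectedness of the reachable sets and the relation $e\in\cl(\AC)$ follow from $0\in\inner\Omega$: the constant control $u\equiv 0$ produces the integral curve $\varphi_{t}(e)=e$, placing the trivial trajectory in every $\AC_{\tau}$.

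I expect the only real obstacle to be bookkeeping rather than conceptual, namely justifying the product-rule differentiation on $G$ rigorously and verifying that the restricted and shifted controls remain in the admissible class $\UC$. The essential mechanism is the automorphism property of $(\varphi_{t})$, which supplies the Leibniz identity for $\XC$ and thereby does all of the genuine work.
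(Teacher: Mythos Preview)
Your argument is correct and is precisely the standard one: the paper does not actually prove this proposition but refers to \cite{JPh}, Proposition~2, and Jouan's proof there proceeds exactly via the factorization $\phi_{\tau,u}(g)=\phi_{\tau,u}(e)\,\varphi_{\tau}(g)$, established by uniqueness using the Leibniz identity for $\XC$ (from the automorphism property of $(\varphi_t)$) together with right-invariance of the $X^j$. One small remark: you discuss connectedness and $e\in\cl(\AC)$, which are not among the three listed items, while item~1 (monotonicity $\AC_{\tau_1}\subset\AC_{\tau_2}$) is only implicit in your write-up; it does follow immediately from your concatenation law once you note $e\in\AC_{s}$ for every $s\ge 0$, so you should make that deduction explicit.
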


\begin{itemize}
\item[$1.$] if $0\leq \tau _{1}\leq \tau _{2}$ then $\mathcal{A}_{\tau
_{1}}\subset \mathcal{A}_{\tau _{2}}$

\item[$2.$] for all $g\in G$ we have $\mathcal{A}_{\tau }(g)=\mathcal{A}%
_{\tau }\varphi _{\tau }(g)$

\item[$3.$] for all $\tau _{2},\tau _{1}\geq 0$ we have $\mathcal{A}_{\tau
_{1}+\tau _{2}}=$ $\mathcal{A}_{\tau _{1}}\varphi _{\tau _{1}}(\mathcal{A}%
_{\tau _{2}})=\mathcal{A}_{\tau _{2}}\varphi _{\tau _{2}}(\mathcal{A}_{\tau
_{1}})$
\end{itemize}

\begin{definition}
The system will be said to be controllable if $\AC=G$.
\end{definition}

In \cite{DaAy} the authors study the controllability property of linear control systems and introduce the following notion

\begin{definition}
\label{finitess} Let $G$ be a connected Lie group. We say that the Lie group $G$ has finite semisimple center if each semisimple Lie subgroup of $G$ has
finite center.
\end{definition}

\begin{remark}
For instance, a connected Lie group $G$ has finite semisimple center if $G$ has one (and hence all) Levi subgroup $L$ with finite center. Actually, any
semisimple Lie subgroup of $G$ is conjugated to a subgroup of $L$. These fact comes from Malcev's Theorem (see \cite{ALEB} Theorem 4.3) and its corollaries.
\end{remark}

Assume that $G$ has finite semisimple center. The relation between the subgroups induced by $\XC$ and the linear control system $\Sigma$ is given by the next result (see Theorem 3.9 of \cite{DaAy}) 

\begin{theorem}
\label{reachable}
If $\AC$ is open then $G^{+,0}\subset\AC$.
\end{theorem}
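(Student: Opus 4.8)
The plan is to obtain the two ``dynamical'' inclusions $G^{+}\subset\AC$ and (the harder) $G^{0}\subset\AC$, and then to glue them using the composition rule of the Proposition on reachable sets. I first record two preliminary facts. Since $0\in\inner\Omega$, the constant control $u\equiv 0$ is admissible, and its trajectory from $e$ is the flow orbit $\varphi_{\tau}(e)=e$; hence $e\in\AC_{\tau}$ for every $\tau>0$, so $e\in\AC$. Combining this with property $3$ written as $\AC_{\tau+t}=\AC_{t}\,\varphi_{t}(\AC_{\tau})$, for any $g\in\AC_{\tau}$ and $t\ge 0$ I get $\varphi_{t}(g)=e\cdot\varphi_{t}(g)\in\AC_{t}\varphi_{t}(\AC_{\tau})=\AC_{\tau+t}\subset\AC$. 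Thus $\AC$ is forward invariant under the flow, $\varphi_{t}(\AC)\subset\AC$ for all $t\ge 0$, and by hypothesis it is an open neighbourhood of $e$.

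For the unstable part, fix $g\in G^{+}$. Because $G^{+}$ is $\varphi$-invariant with Lie algebra $\fg^{+}$, on which $\rme^{-t\DC}\to 0$ as $t\to+\infty$, the backward orbit satisfies $\varphi_{-t}(g)\to e$. As $\AC$ is an open neighbourhood of $e$, there is $T\ge 0$ with $\varphi_{-T}(g)\in\AC$, whence $g=\varphi_{T}(\varphi_{-T}(g))\in\varphi_{T}(\AC)\subset\AC$. Therefore $G^{+}\subset\AC$.

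The heart of the matter is $G^{0}\subset\AC$, which I expect to be the main obstacle. On $\fg^{0}$ the eigenvalues of $\DC$ are purely imaginary, so $\varphi_{-t}$ neither contracts nor, in general, returns a given point near $e$: already for a single nontrivial Jordan block at $0$ the orbits escape to infinity, so neither the contraction argument above nor pure flow recurrence can reach all of $G^{0}$, and the control must be used essentially. My plan is to pass to the induced linear system on $G^{0}$ (the flow normalises $G^{0}$, and openness of $\AC$ forces the rank condition, which is inherited by the induced system) and to prove controllability of a linear system whose derivation has only purely imaginary eigenvalues. For such a system $e$ is Lyapunov stable in both time directions, by polynomial boundedness of $\rme^{\pm t\DC}$ on $\fg^{0}$, while the rank condition supplies control directions that reach the tops of the Jordan chains; an induction on the Jordan structure of $\DC|_{\fg^{0}}$ should then yield $G^{0}\subset\AC$. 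Making this center case rigorous is the crux of the argument.

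Finally I assemble the two pieces. Since $\fg^{+,0}=\fg^{+}\oplus\fg^{0}$ with $\fg^{+}$ an ideal, and the subgroups are closed and connected, the product map yields $G^{+,0}=G^{0}G^{+}$. Given $g\in G^{+,0}$, write $g=g^{0}g^{+}$ with $g^{0}\in G^{0}$ and $g^{+}\in G^{+}$, and choose $\tau_{0}>0$ with $g^{0}\in\AC_{\tau_{0}}$. Because $G^{+}$ is $\varphi$-invariant, $h:=\varphi_{-\tau_{0}}(g^{+})\in G^{+}\subset\AC$, say $h\in\AC_{\tau'}$. Then property $3$ gives $g=g^{0}\,\varphi_{\tau_{0}}(h)\in\AC_{\tau_{0}}\varphi_{\tau_{0}}(\AC_{\tau'})=\AC_{\tau_{0}+\tau'}\subset\AC$. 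Hence $G^{+,0}\subset\AC$, as claimed.
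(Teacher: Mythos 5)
You should first note that the paper does not prove this statement at all: it is imported verbatim as Theorem 3.9 of \cite{DaAy}, under the standing hypothesis that $G$ has finite semisimple center. So the comparison here is really with that external proof, which is a substantial structural argument.

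Your attempt has a genuine gap, and you identify it yourself: the inclusion $G^{0}\subset\AC$ is only sketched, and it is precisely the nontrivial content of the theorem. The parts you do carry out are fine --- $e\in\AC_{\tau}$ via the zero control, forward invariance $\varphi_{t}(\AC)\subset\AC$ from property 3, the contraction argument $\varphi_{-t}(g)\to e$ on the simply connected nilpotent group $G^{+}$, and the final gluing $G^{+,0}=G^{0}G^{+}$ using $\AC_{\tau_{0}+\tau'}=\AC_{\tau_{0}}\varphi_{\tau_{0}}(\AC_{\tau'})$ are all correct and standard. But the plan for the central part would not work as stated. First, there is in general no ``induced linear system on $G^{0}$'': the control vector fields $X^{j}$ are right-invariant on $G$ and have no reason to be tangent to the subgroup $G^{0}$, so you cannot simply restrict $\Sigma$ and argue by controllability of a system whose derivation has purely imaginary eigenvalues. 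Second, your argument never uses the finite semisimple center hypothesis, which the paper imposes before stating the theorem and which the proof in \cite{DaAy} genuinely needs (the argument there passes through the Levi decomposition and treats the solvable and semisimple parts of $G^{0}$ separately); a proof that is indifferent to this hypothesis should make you suspicious. Until the case $G^{0}\subset\AC$ is established by an argument of that type (or an alternative rigorous one), the proof is incomplete.
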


\begin{remark}
By Theorem 3.5 of \cite{VAJT}, one way to assure the openness condition on $\AC$ is by the {\it ad-rank} condition. The system $\Sigma$ is said to satisfies the ad-rank condition if 
$$\mathrm{span}\{\DC^i(X^j), i\in\N_0, j=1, \ldots, m\}=\fg, \;\;\mbox{ where }\;\;\N_0:=\N\cup\{0\}.$$
When $\Sigma$ satisfies the ad-rank condition we have that $e\in\inner\AC_{\tau}$ for all $\tau>0$ which implies, in particular, that $\AC$ is an open subset.
\end{remark}

\section{Semigroup associated to a linear control system on a Lie group}

Since the positive orbit $\mathcal{A}$ of a linear control system $\Sigma $ is in general not a semigroup (see for instance \cite{VASM}, Section 4), in this section we associate to $\Sigma $ a
new algebraic object $\mathcal{S}_{\Sigma }$ which turns out to be a semigroup. In particular, $\mathcal{S}_{\Sigma }$ enable us to pass from the
control theo\-ry of linear systems to the theory of semigroups. Furthermore, controllability of $\Sigma $ is equivalent to $\mathcal{S}_{\Sigma}=G$.

From here we assume that the linear system $\Sigma$ on $G$ satisfies the rank-condition. As before, we denote by $\left(\varphi _{t}\right)_{t\in\R}$ the $1$-parameter group of automorphisms associated to the drift $\mathcal{X}$ of $\Sigma$. Next, we introduce the subset $%
\mathcal{S}_{\Sigma }\subset G$ defined by 
\begin{equation*}
\mathcal{S}_{\Sigma }:=\bigcap_{t\in \mathbb{R}}\varphi _{t}(\mathcal{A}).
\end{equation*}%
Since $\varphi _{t}(e)=e$ for all $t\in \mathbb{R}$ and $e\in \mathcal{A}$
it follows that $\mathcal{S}_{\Sigma }\neq \emptyset $.

\begin{proposition}
\label{semigroup} With the previous notations it holds

\begin{itemize}
\item[1.] $\mathcal{S}_{\Sigma }$ is the greatest $\varphi $-invariant
subset of $\mathcal{A}$

\item[2.] For any $\tau _{0}\geq 0$ 
\begin{equation*}
\mathcal{S}_{\Sigma }=\bigcap_{t\text{ }\geq \tau _{0}}\varphi _{t}(%
\mathcal{A})
\end{equation*}

\item[3.] $x\in \mathcal{S}_{\Sigma }$ if and only if $\varphi _{t}(x)\in 
\mathcal{A}$ for all $t\leq 0$

\item[4.] $\mathcal{S}_{\Sigma }$ is a semigroup
\end{itemize}
\end{proposition}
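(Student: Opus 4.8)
The plan is to prove the four claims in order, leaning on the semigroup property of the reachable set under the flow (Proposition items 2 and 3) and the definition $\mathcal{S}_{\Sigma}=\bigcap_{t\in\R}\varphi_t(\mathcal{A})$.

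First, for item~1, I would show $\mathcal{S}_{\Sigma}$ is $\varphi$-invariant, meaning $\varphi_s(\mathcal{S}_{\Sigma})=\mathcal{S}_{\Sigma}$ for all $s$. Applying $\varphi_s$ to the intersection and using that each $\varphi_s$ is an automorphism (hence commutes with intersection) gives $\varphi_s(\mathcal{S}_{\Sigma})=\bigcap_{t}\varphi_{s+t}(\mathcal{A})=\bigcap_{t}\varphi_t(\mathcal{A})=\mathcal{S}_{\Sigma}$, where the middle equality is just reindexing $t\mapsto t-s$. Since $\mathcal{S}_{\Sigma}\subset\varphi_0(\mathcal{A})=\mathcal{A}$, it is a $\varphi$-invariant subset of $\mathcal{A}$. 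For \emph{greatest}, if $B\subset\mathcal{A}$ is $\varphi$-invariant, then for every $t$ we have $B=\varphi_t(B)\subset\varphi_t(\mathcal{A})$, so $B\subset\bigcap_t\varphi_t(\mathcal{A})=\mathcal{S}_{\Sigma}$.

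For item~2, the inclusion $\mathcal{S}_{\Sigma}\subset\bigcap_{t\geq\tau_0}\varphi_t(\mathcal{A})$ is immediate since the latter intersects over fewer sets. For the reverse, I would use the nesting of reachable sets: by Proposition item~1, $\tau_1\leq\tau_2\Rightarrow\mathcal{A}_{\tau_1}\subset\mathcal{A}_{\tau_2}$, and I want to translate this into a monotonicity of $\varphi_t(\mathcal{A})$ in $t$. The key computation is that $\varphi_t(\mathcal{A})\subset\varphi_s(\mathcal{A})$ when $s\leq t$; this should follow from Proposition item~2 or~3, which relate $\mathcal{A}_\tau(g)$ to $\mathcal{A}_\tau\varphi_\tau(g)$ and let one absorb a flow-translation into the reachable set. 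Once $t\mapsto\varphi_t(\mathcal{A})$ is decreasing, the intersection over $t\geq\tau_0$ already forces membership for all smaller $t$, giving equality. For item~3, I would rewrite $x\in\mathcal{S}_{\Sigma}$ as $x\in\varphi_t(\mathcal{A})$ for all $t$, equivalently $\varphi_{-t}(x)\in\mathcal{A}$ for all $t$, i.e. $\varphi_s(x)\in\mathcal{A}$ for all $s\leq0$; the forward direction of this is definitional, and the fact that $s\leq 0$ suffices (rather than all $s$) again uses the monotonicity established in item~2.

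Finally, for item~4, to show $\mathcal{S}_{\Sigma}$ is closed under the group product, I would take $x,y\in\mathcal{S}_{\Sigma}$ and verify $xy\in\varphi_t(\mathcal{A})$ for every $t$, using the characterization from item~3: it suffices to check $\varphi_s(xy)\in\mathcal{A}$ for all $s\leq0$. Since each $\varphi_s$ is an automorphism, $\varphi_s(xy)=\varphi_s(x)\varphi_s(y)$, and both factors lie in $\mathcal{A}$ by item~3; the remaining task is to show $\mathcal{A}$ absorbs such a product back into itself, which is exactly where Proposition item~3, $\mathcal{A}_{\tau_1+\tau_2}=\mathcal{A}_{\tau_1}\varphi_{\tau_1}(\mathcal{A}_{\tau_2})$, enters. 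I expect the main obstacle to be item~4: the reachable set $\mathcal{A}$ is \emph{not} itself a semigroup, so the product $\varphi_s(x)\varphi_s(y)$ need not stay in $\mathcal{A}$ directly, and the argument must exploit the $\varphi$-invariance of $\mathcal{S}_{\Sigma}$ together with the precise flow-twisted concatenation formula to reassemble the product as a legitimate trajectory. Careful bookkeeping of the time parameters and the flow-shift in $\mathcal{A}_{\tau_1}\varphi_{\tau_1}(\mathcal{A}_{\tau_2})$ will be the crux.
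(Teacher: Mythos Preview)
Your plan matches the paper's proof in every item, and items 1--3 are essentially complete as written. For item~4 the paper executes exactly the maneuver you anticipate: fixing $t\geq 0$ and choosing $s>0$ with $\varphi_{-t}(x)\in\mathcal{A}_{s}$, it uses the \emph{full} membership $y\in\mathcal{S}_\Sigma$ (not merely $\varphi_{-t}(y)\in\mathcal{A}$) to push $y$ back an additional $s$ units, so that $\varphi_{-s-t}(y)\in\mathcal{A}_{s'}$ for some $s'>0$, and then $\varphi_{-t}(xy)=\varphi_{-t}(x)\cdot\varphi_{s}\bigl(\varphi_{-s-t}(y)\bigr)\in\mathcal{A}_{s}\,\varphi_{s}(\mathcal{A}_{s'})=\mathcal{A}_{s+s'}\subset\mathcal{A}$.
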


\begin{proof}

\begin{itemize}
\item[1.] We start by proving the $\varphi $-invariance of $\mathcal{S}_{\Sigma }.$ Let $\tau \in \mathbb{R}$, then 
\begin{equation*}
\varphi _{\tau }(\mathcal{S}_{\Sigma })=\varphi _{\tau }\left(\bigcap_{t\in 
\mathbb{R}}\varphi _{t}(\mathcal{A)}\right)=\bigcap_{t\in \mathbb{R}}\varphi
_{\tau }(\varphi _{t}(\mathcal{A}))=\bigcap_{t\in \mathbb{R}}\varphi _{\tau
+t}(\mathcal{A})=\mathcal{S}_{\Sigma }
\end{equation*}%
Now, let $C$ be a $\varphi $-invariant subset of $\mathcal{A}$. It holds
that 
\begin{equation*}
C=\varphi _{t}(C)\subset \varphi _{t}(\mathcal{A}),\text{ for all}\;t\in 
\mathbb{R}\Leftrightarrow C\subset \bigcap_{t\in \mathbb{R}}\varphi _{t}(%
\mathcal{A})=\SC_{\Sigma }
\end{equation*}%
showing that $\SC_{\Sigma }$ is the greatest $\varphi $-invariant subset of $%
\mathcal{A}$.

\item[2.] By the $\varphi $-invariance of $\mathcal{A}$ in positive time, we
get 
\begin{equation*}
\tau _{0}-t\geq 0\Rightarrow \varphi _{\tau _{0}-t}(\mathcal{A})\subset \mathcal{%
A}\Leftrightarrow \varphi _{\tau _{0}}(\mathcal{A})\subset \varphi _{t}(%
\mathcal{A})
\end{equation*}%
and consequently 
\begin{equation*}
\varphi _{\tau _{0}}(\mathcal{A})=\bigcap_{t\text{ }\leq \text{ }\tau
_{0}}\varphi _{t}(\mathcal{A}).
\end{equation*}%
Therefore, 
\begin{equation*}
\mathcal{S}_{\Sigma }=\bigcap_{t\text{ }>\tau _{0}}\varphi _{t}(\mathcal{A}%
)\cap \bigcap_{\text{ }t\text{ }\leq \tau _{0}}\varphi _{t}(\mathcal{A}%
)=\bigcap_{t\text{ }>\tau _{0}}\varphi _{t}(\mathcal{A})\cap \varphi _{\tau
_{0}}(\mathcal{A})=\bigcap_{t\text{ }\geq \tau _{0}}\varphi _{t}(\mathcal{A}%
)
\end{equation*}%
as desired.

\item[3.] We have 
\begin{equation*}
x\in \mathcal{S}_{\Sigma }\Leftrightarrow x\in \varphi _{t}(\mathcal{A})%
\text{ for all }t\geq 0\Leftrightarrow \varphi _{t}(x)\in \mathcal{A}\text{
for all }t\leq 0.
\end{equation*}

\item[4.] Let $x,y\in \mathcal{S}_{\Sigma }$ and consider $x_{t}:=\varphi_{-t}(x)$ and $y_{t}=\varphi _{-t}(y)$. By item 3. we just need to show that 
$x_{t}y_{t}=\varphi_{-t}(x)\varphi_{-t}(y)=\varphi _{-t}(xy)\in \mathcal{A}$ for any $t\geq 0$. Since by
hypothesis $x_{t}\in \mathcal{A}$, there exists $s_{t}>0$ such that $%
x_{t}\in \mathcal{A}_{s_{t}}$. Since $y\in \mathcal{S}_{\Sigma }$ we get $%
\varphi _{-s_{t}}(y_{t})=\varphi _{-s_{t}-t}(y)\in \mathcal{A}$ and there is 
$s_{t}^{\prime }>0$ such that $\varphi _{-s_{t}}(y_{t})\in \mathcal{A}%
_{s_{t}^{\prime }}$ which gives us 
\begin{equation*}
x_{t}y_{t}=x_{t}\varphi _{s_{t}}(\varphi _{-s_{t}}(y_t))\in \mathcal{A}%
_{s_{t}}\varphi _{s_{t}}(\mathcal{A}_{s_{t}^{\prime }})=\mathcal{A}%
_{s_{t}+s_{t}^{\prime }}\subset \mathcal{A}
\end{equation*}%
showing that $S_{\Sigma }$ is a semigroup.
\end{itemize}
\end{proof}

\begin{definition}
$\mathcal{S}_{\Sigma }$ is called the semigroup of the system $\Sigma $.
\end{definition}

The next result states that, in order to study the controllability property of $\Sigma $ it is enough to look at the semigroup $\SC_{\Sigma }$. In fact,

\begin{theorem}
\label{equivalence}
$\mathcal{A}=G$ if and only if $\SC_{\Sigma }=\mathcal{A}.$
\end{theorem}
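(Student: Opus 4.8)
The statement is $\AC = G$ if and only if $\SC_\Sigma = \AC$. The plan is to prove each implication separately, using the characterization of $\SC_\Sigma$ as the greatest $\varphi$-invariant subset of $\AC$ from Proposition~\ref{semigroup}, item~1, together with the $\varphi$-invariance of $\AC$ in positive time (Proposition, item~3 of the reachable-set properties). The easy direction is the forward one: if $\AC = G$, then since $\SC_\Sigma \subset \AC$ always holds by definition, it suffices to show $G \subset \SC_\Sigma$. But $\varphi_t(G) = G$ for every $t \in \R$ because each $\varphi_t$ is an automorphism of $G$, so $G$ is itself a $\varphi$-invariant subset contained in $\AC = G$; by the maximality in Proposition~\ref{semigroup}.1 we get $G \subset \SC_\Sigma$, hence $\SC_\Sigma = \AC = G$.

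The reverse implication is the substantive one. Assuming $\SC_\Sigma = \AC$, I want to conclude $\AC = G$. The key observation is that $\SC_\Sigma$ is $\varphi$-invariant, so the hypothesis forces $\AC$ itself to be $\varphi$-invariant: $\varphi_t(\AC) = \AC$ for all $t \in \R$. In general $\AC$ is only invariant in positive time, meaning $\varphi_\tau(\AC) \subset \AC$ for $\tau \geq 0$ (equivalently $\AC \subset \varphi_t(\AC)$ for $t \geq 0$); the new content is the reverse inclusion for positive times, i.e. $\AC \subset \varphi_\tau(\AC)$ becomes an equality. The plan is to exploit this together with the semigroup property: once $\AC = \SC_\Sigma$, the reachable set is simultaneously a semigroup and $\varphi$-invariant.

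The main step is then to show that a $\varphi$-invariant sub\emph{semigroup} of $G$ which contains a reachable set of a system satisfying the rank-condition must be all of $G$. Here I would use the rank-condition: it guarantees that the system is controllable on a neighborhood structure rich enough that the accessibility algebra is all of $\fg$, so $\AC$ generates $G$ as a group and, in particular, $\AC$ together with its inverses generates $G$. The crucial point is that $\varphi$-invariance of $\AC$ lets me recover inverses: for $x \in \AC$, the negative-time trajectory $\varphi_{-t}(x)$ stays in $\AC$ by Proposition~\ref{semigroup}.3, and combining forward reachability with the automorphism action should produce a symmetric set. Concretely, I expect to show that $\varphi$-invariance plus the semigroup property makes $\SC_\Sigma$ a \emph{subgroup}: if $x \in \SC_\Sigma$ then its inverse lies in $\SC_\Sigma$ as well, because negative-time invariance supplies the element needed to cancel $x$. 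A connected Lie group has no proper open subsemigroup that is $\varphi$-invariant and satisfies the rank-condition other than $G$ itself.

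The hardest part will be rigorously extracting the reverse inclusion $G \subset \AC$ from $\varphi$-invariance alone, since $\varphi$-invariance by itself does not immediately give inverses in $G$. I anticipate the argument hinges on showing $\SC_\Sigma$ is a \textbf{group}: the semigroup property gives closure under multiplication, and the combination of $\varphi$-invariance (Proposition~\ref{semigroup}.1) with the explicit construction in the proof of item~4 — where $\varphi_{-t}(x) \in \AC$ for all $t \geq 0$ — should let me produce, for each $x \in \SC_\Sigma$, a group inverse inside $\SC_\Sigma$. Once $\SC_\Sigma$ is a $\varphi$-invariant subgroup containing the reachable set $\AC$ whose accessibility algebra is $\fg$ (by the rank-condition), it must be an open subgroup of the connected group $G$, hence equal to $G$. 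I would close the argument by invoking connectedness of $G$ to upgrade "open subgroup" to "all of $G$," giving $\AC = \SC_\Sigma = G$.
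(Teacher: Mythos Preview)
Your forward direction is fine and matches the paper. The reverse direction, however, has a genuine gap. You correctly observe that $\SC_\Sigma=\AC$ forces $\AC$ to be a $\varphi$-invariant semigroup. But your proposed mechanism for upgrading this to a \emph{group} does not work: knowing $\varphi_{-t}(x)\in\AC$ for all $t\ge 0$ gives you the orbit of $x$ under the automorphisms $\varphi_{-t}$, and these automorphisms have no relation to group inversion in $G$. There is no way to manufacture $x^{-1}$ from the elements $\varphi_{-t}(x)$ and the semigroup product alone, so the step ``$\varphi$-invariance plus the semigroup property makes $\SC_\Sigma$ a subgroup'' is unjustified. Your subsequent plan (open subgroup $\Rightarrow$ all of $G$ by connectedness) also presupposes that $\AC$ is open, which the rank condition by itself does not guarantee: it gives $\inner\AC\neq\emptyset$, not $e\in\inner\AC$.

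The paper sidesteps all of this by invoking an external result: once $\AC$ is a semigroup, Proposition~7 of Jouan \cite{JPh} directly yields $\AC=G$ (for linear systems satisfying the rank condition). So the reverse implication in the paper is a one-line citation, whereas you are attempting to reprove Jouan's proposition from scratch with an argument that, as sketched, does not go through. If you want a self-contained proof you would need to supply a correct argument for ``$\AC$ a semigroup $\Rightarrow \AC=G$'', which is a nontrivial fact requiring more than $\varphi$-invariance.
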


\begin{proof}
If $\mathcal{A}=G$, it follows that $\varphi _{t}(\mathcal{A})=\varphi
_{t}(G)=G$ for all $t\in \mathbb{R}$ which implies that 
\begin{equation*}
G=\bigcap_{t\text{ }\geq \text{ }0}\varphi _{t}(\mathcal{A})=\SC_{\Sigma }.
\end{equation*}

Reciprocally, if $\SC_{\Sigma }=\mathcal{A}$ then $\mathcal{A}$ is a
semigroup. However, Proposition 7 of \cite{JPh} assures that $\mathcal{A}$
is a semigroup if and only if $\mathcal{A}=G$. Then, the result follows.
\end{proof}

\begin{remark}
Just observe that from our previous theorem, $\mathcal{S}_{\Sigma }$ is not a proper semigroup of the reachable set if and only if it is the whole group 
$G$. In parti\-cular, the semigroup of the system enable us to pass from the control theory of linear systems to the theory of semigroups. Furthermore,
controllability of $\Sigma $ is equivalent to the equality $\mathcal{S}_{\Sigma }=G$.
\end{remark}

The above theorem implies the following.

\begin{corollary}
$\mathcal{A}=G\Leftrightarrow \mathcal{A}$ is $\varphi $-invariant.
\end{corollary}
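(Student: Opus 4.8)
The plan is to prove the Corollary by combining Theorem \ref{equivalence} with Proposition \ref{semigroup} to show that the two stated conditions are equivalent. Recall that the Corollary asserts $\mathcal{A}=G$ if and only if $\mathcal{A}$ is $\varphi$-invariant. The forward direction is immediate: if $\mathcal{A}=G$, then since each $\varphi_t$ is an automorphism of $G$ we have $\varphi_t(\mathcal{A})=\varphi_t(G)=G=\mathcal{A}$ for every $t\in\mathbb{R}$, so $\mathcal{A}$ is trivially $\varphi$-invariant. This needs no appeal to the earlier results beyond the fact that automorphisms are surjective.

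For the reverse direction, suppose $\mathcal{A}$ is $\varphi$-invariant. By item $1$ of Proposition \ref{semigroup}, $\mathcal{S}_{\Sigma}$ is the greatest $\varphi$-invariant subset of $\mathcal{A}$. Since $\mathcal{A}$ itself is a $\varphi$-invariant subset of $\mathcal{A}$, maximality forces $\mathcal{A}\subset\mathcal{S}_{\Sigma}$, and as $\mathcal{S}_{\Sigma}\subset\mathcal{A}$ holds by definition (taking $t=0$ in the intersection), we conclude $\mathcal{S}_{\Sigma}=\mathcal{A}$. Now Theorem \ref{equivalence} tells us that $\mathcal{S}_{\Sigma}=\mathcal{A}$ is equivalent to $\mathcal{A}=G$, which gives the desired conclusion.

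There is essentially no obstacle here; the Corollary is a direct repackaging of the two previously established results, with item $1$ of Proposition \ref{semigroup} supplying the bridge. The only point requiring a moment of care is to justify that $\varphi$-invariance of $\mathcal{A}$ yields $\mathcal{A}=\mathcal{S}_{\Sigma}$ rather than merely $\mathcal{A}\subset\mathcal{S}_{\Sigma}$, but this is handled by the trivial reverse inclusion coming from the definition $\mathcal{S}_{\Sigma}=\bigcap_{t\in\mathbb{R}}\varphi_t(\mathcal{A})$. I would write the proof in two short paragraphs mirroring the two implications, explicitly citing item $1$ of Proposition \ref{semigroup} in the reverse direction and Theorem \ref{equivalence} to close the argument.
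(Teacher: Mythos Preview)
Your proof is correct and follows essentially the same route as the paper: the forward implication is immediate, and for the converse you derive $\mathcal{A}\subset\SC_{\Sigma}$ from the $\varphi$-invariance of $\mathcal{A}$ (via item~1 of Proposition~\ref{semigroup}), obtain $\SC_{\Sigma}=\mathcal{A}$, and then invoke Theorem~\ref{equivalence}. The paper's argument is the same, only slightly more terse.
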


\begin{proof}
If $\mathcal{A}=G$, then $\mathcal{A}$ is certainly $\varphi $-invariant.
Conversely, if $\varphi _{t}(\mathcal{A})=\mathcal{A}$ for any $t\in \mathbb{%
R}$ we obtain $\mathcal{A}$ $\subset \SC_{\Sigma }$ which implies $\SC_{\Sigma }=%
\mathcal{A}$ and by Theorem \ref{equivalence} we get $\mathcal{A}=G$.
\end{proof}

Under the assumption that the reachable set is open we get the following.

\begin{proposition}
If $\AC$ is open then $G^{+,0}\subset \SC_{\Sigma }$.
Moreover, 
\begin{equation*}
\Sigma \text{ is controllable}\Leftrightarrow G^{-}\subset \SC_{\Sigma }.
\end{equation*}
\end{proposition}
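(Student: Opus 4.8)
The plan is to handle the inclusion and the equivalence separately, obtaining the inclusion almost directly from the results already established and then exploiting the semigroup structure for the equivalence.

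For $G^{+,0} \subset \SC_\Sigma$, I would apply Theorem \ref{reachable}: since $\AC$ is open, $G^{+,0} \subset \AC$. The subgroup $G^{+,0}$ is $\varphi$-invariant (Proposition 2.9 of \cite{DaSilva}), so it is a $\varphi$-invariant subset of $\AC$; by item 1 of Proposition \ref{semigroup}, $\SC_\Sigma$ is the greatest such subset, and hence $G^{+,0} \subset \SC_\Sigma$.

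For the equivalence, the forward implication is immediate: if $\Sigma$ is controllable then $\AC = G$, so Theorem \ref{equivalence} (or the definition directly, since $\varphi_t(G) = G$) gives $\SC_\Sigma = G$, and in particular $G^- \subset \SC_\Sigma$. The content is in the converse. Assume $G^- \subset \SC_\Sigma$; together with the first part, both subgroups $G^{+,0}$ and $G^-$ lie in $\SC_\Sigma$. As $\SC_\Sigma$ is a semigroup (item 4 of Proposition \ref{semigroup}), it contains the sub-semigroup $S$ generated by $G^{+,0} \cup G^-$. The decisive observation is that $S$ is in fact a subgroup: every element of $S$ is a finite product of elements of $G^{+,0}$ and $G^-$, and since each of these is closed under inversion, the inverse of such a product again belongs to $S$ (and $e \in S$). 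Thus $S$ is a connected subgroup of $G$ containing the connected subgroups $G^{+,0}$ and $G^-$, so its Lie algebra contains $\fg^{+,0} + \fg^- = \fg$; since $G$ is connected, $S = G$. Therefore $G = S \subset \SC_\Sigma \subset \AC \subset G$, whence $\AC = G$ and $\Sigma$ is controllable.

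I expect the only subtle step to be upgrading the semigroup generated by $G^{+,0} \cup G^-$ to the group they generate; once one observes that inversion preserves membership in each subgroup, the semigroup property of $\SC_\Sigma$ already forces the full group, and the vector-space identity $\fg^{+,0} + \fg^- = \fg$ (so that no Lie brackets are even needed) closes the argument by connectedness of $G$.
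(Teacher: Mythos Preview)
Your proof is correct, and the first inclusion and the forward implication match the paper exactly. For the converse you take a slightly different route: the paper simply observes that $G^{+,0}G^{-}\subset\SC_{\Sigma}$ is a neighborhood of the identity (since $\fg^{+,0}\oplus\fg^{-}=\fg$ makes the product map a local diffeomorphism at $(e,e)$), hence $e\in\inner\SC_{\Sigma}$, and then invokes the standard fact that a semigroup in a connected group with the identity in its interior equals the whole group. You instead argue that the semigroup generated by $G^{+,0}\cup G^{-}$ is already a group and identify its Lie algebra as $\fg$. Both arguments rest on the same decomposition $\fg=\fg^{+,0}\oplus\fg^{-}$; the paper's version is a touch shorter and more topological, while yours is more algebraic and avoids appealing to the interior-point criterion for semigroups.
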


\begin{proof}
Since $\AC$ is open, Theorem \ref{reachable} implies $G^{+,0}\subset\AC$ and since $G^{+, 0}$ is $\varphi$-invariant we have by Proposition \ref{semigroup} that $G^{+,0}\subset \mathcal{S}_{\Sigma }$.

If $\Sigma$ is controllable, Theorem \ref{equivalence} implies $\SC_{\Sigma}=G$ and therefore $G^-\subset \SC_{\Sigma}$. Reciprocally, if we assume that $G^{-}\subset \SC_{\Sigma }$, we obtain $G^{+,0}G^{-}\subset \SC_{\Sigma }$. Since $G^{+,0}G^{-} $ is a neighborhood of the identity we get $e\in \inner \SC_{\Sigma }.$ On the other hand, we are assuming that $G$ is connected and
since we already prove that $\SC_{\Sigma }$ is a semigroup, it follows that $%
\SC_{\Sigma }=G$.
\end{proof}

The previous proposition shows that, in order to analyze the controllability
of $\Sigma $, it is enough to study the semigroup induced by $G^{-}$ given
by 
\begin{equation*}
\SC_{\Sigma }^{-}=\SC_{\Sigma }\cap G^{-}=\bigcap_{t\text{ }\geq \text{ }%
0}\varphi _{t}(\mathcal{A}\cap G^{-}).
\end{equation*}%
In other words, in order to analyze controllability of a linear control system on a connected Lie group $G$, we only need to study the behavior of the $\varphi$-invariant semigroup $\SC_{\Sigma}^-$ on the nilpotent Lie group $G^{-}$.

\begin{corollary}
Let $\Sigma $ be a linear control system on a connected Lie group $G$ such that $G^-=\{e\}$ and assume that $\Sigma $ satisfy the $ad$-rank condition. Then, $\Sigma $ is controllable
\end{corollary}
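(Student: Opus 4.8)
The plan is to reduce the statement to the preceding Proposition, whose reverse implication makes controllability hinge only on the (a priori much larger) subgroup $G^{-}$ being contained in $\SC_{\Sigma}$.

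First I would verify that the hypotheses of that Proposition are in force. The standing assumption of this section is the rank-condition, and since the ad-rank condition (which uses ordinary span in place of Lie span) clearly implies the rank-condition, $\Sigma$ qualifies. The ad-rank condition also supplies the openness input: by the Remark following Theorem \ref{reachable}, it guarantees $e\in\inner\AC_{\tau}$ for every $\tau>0$, and in particular that $\AC$ is open. This is the only nonformal verification, and it is handed to us by that Remark rather than something I must establish from scratch.

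With $\AC$ open, the preceding Proposition applies and tells me that $\Sigma$ is controllable if and only if $G^{-}\subset\SC_{\Sigma}$. The key observation is that the hypothesis $G^{-}=\{e\}$ trivializes this condition: as noted immediately after the definition of $\SC_{\Sigma}$, the identity $e$ always lies in $\SC_{\Sigma}$ (because $\varphi_{t}(e)=e$ for all $t$ and $e\in\AC$), so $G^{-}=\{e\}\subset\SC_{\Sigma}$ holds automatically. The equivalence then yields $\AC=G$, that is, controllability.

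I do not anticipate a genuine obstacle, since the content has been front-loaded into Theorem \ref{reachable} and the preceding Proposition; the one point to state carefully is that we remain under the blanket assumption that $G$ has finite semisimple center, as Theorem \ref{reachable}, and hence the Proposition, is proved under it. An equally short route avoids $\SC_{\Sigma}$ entirely: from $G^{-}=\{e\}$ one gets $\fg^{-}=\{0\}$, so $\fg=\fg^{+,0}$ and therefore $G^{+,0}=G$ by connectedness of $G$; openness of $\AC$ then gives $G=G^{+,0}\subset\AC$ via Theorem \ref{reachable}, forcing $\AC=G$.
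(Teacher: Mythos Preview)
Your proposal is correct and follows essentially the same approach as the paper: use the $ad$-rank condition to ensure that $\AC$ is open, then invoke the preceding Proposition so that controllability reduces to $G^{-}\subset\SC_{\Sigma}$, which is trivially satisfied when $G^{-}=\{e\}$. Your write-up is simply a more detailed version of the paper's one-line proof, and the alternative route you sketch via Theorem \ref{reachable} is also valid.
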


\begin{proof}
Since $G^{-}$ is trivial and $\mathcal{A}$ is open, the results follows
directly from the previous theorem.

\end{proof}

For compact Lie groups we have the following.

\begin{proposition}
Let $G$ be a compact Lie group. It holds that
$$\AC \mbox{ is open }\Leftrightarrow \AC=G \Leftrightarrow \SC_{\Sigma}=G$$
\end{proposition}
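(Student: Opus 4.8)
The plan is to reduce everything to one structural fact about the drift on a compact group: the $\fg$-derivation $\DC$ associated with $\XC$ has only purely imaginary eigenvalues, so that $\fg^+=\fg^-=\{0\}$ and $\fg^0=\fg$, whence $G^{+,0}=G$ and $G^-=\{e\}$. Granting this, the three conditions close up into a short cycle of implications built from the results already proved. I would also record at the outset that a compact connected group has finite semisimple center (its Levi subgroup $[G,G]$ is compact semisimple, hence has finite center), so that Theorem \ref{reachable} applies.

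First I would establish the eigenvalue claim. Since $G$ is compact and connected, the continuous one-parameter subgroup $(\varphi_t)_{t\in\R}$ of $\mathrm{Aut}(G)$ with $\varphi_0=\id$ lies in the identity component $\mathrm{Aut}(G)^0$, which for compact connected $G$ coincides with the compact group $\mathrm{Inn}(G)\cong G/Z(G)$ of inner automorphisms. The underlying reason, which also makes the claim robust, is the decomposition $\fg=\fz\oplus[\fg,\fg]$ preserved by $\DC$: on the central torus $Z(G)^0$ every automorphism sits in the discrete group $\mathrm{GL}(k,\Z)$, so the continuous family $\varphi_t$ is trivial there and $\DC|_{\fz}=0$; on the semisimple factor $[\fg,\fg]$ every derivation is inner, $\DC=\ad(X)$ for some $X$, and $\ad(X)$ is skew-adjoint for an invariant inner product, hence has purely imaginary spectrum. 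Consequently $(d\varphi_t)_e=\rme^{t\DC}$ ranges in a compact subgroup of $\mathrm{GL}(\fg)$ and is uniformly bounded in $t$, forcing $\mathrm{Re}(\alpha)=0$ for every eigenvalue $\alpha$ of $\DC$. Thus $\fg=\fg^0$, and $G^{+,0}=G$, $G^-=\{e\}$.

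With this in hand I would run the cycle $(\AC\text{ open})\Rightarrow(\AC=G)\Rightarrow(\SC_\Sigma=G)\Rightarrow(\AC=G)\Rightarrow(\AC\text{ open})$. For the first step, if $\AC$ is open then Theorem \ref{reachable} gives $G^{+,0}\subset\AC$, and since $G^{+,0}=G$ this yields $\AC=G$. The step $\AC=G\Rightarrow\SC_\Sigma=G$ is Theorem \ref{equivalence} together with the inclusion $\SC_\Sigma\subset\AC$. The step $\SC_\Sigma=G\Rightarrow\AC=G$ follows from $\SC_\Sigma\subset\AC\subset G$, and $\AC=G\Rightarrow\AC\text{ open}$ is immediate since $G$ is open in itself.

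The only real content is the structural fact, and the main obstacle is pinning down that $(\varphi_t)$ is bounded, i.e.\ that $\DC$ has purely imaginary eigenvalues on a compact group (equivalently $\mathrm{Aut}(G)^0=\mathrm{Inn}(G)$). Everything downstream is formal bookkeeping with the previously established theorems, and the symmetry $\fg^+=\fg^-=\{0\}$ means one needs only $G^{+,0}=G$ to drive the crucial first implication.
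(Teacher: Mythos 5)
Your proof is correct and follows essentially the same route as the paper: reduce to the structural fact $G=G^{0}$ (equivalently $G^{+,0}=G$ and $G^{-}=\{e\}$), then combine Theorem \ref{reachable} with Theorem \ref{equivalence}. The only difference is that the paper simply cites Proposition 2.10 of \cite{DaSilva} for the fact that compactness forces $\fg=\fg^{0}$, whereas you supply a self-contained (and correct) proof of it via the decomposition $\fg=\fz\oplus[\fg,\fg]$ and the skew-adjointness of inner derivations.
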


\begin{proof}
By Proposition 2.10 of \cite{DaSilva} if $G$ is compact then $G=G^0$. Therefore, by Theorem \ref{reachable} we have that $\AC$ is open if and only if $\AC=G$. The other equivalence is Theorem \ref{equivalence}. 
\end{proof}

\subsection{The semisimple case}
In this section we show that for linear systems on semisimple Lie groups the controllability of the linear control system $\Sigma$ is equivalent to $\inner\SC_{\Sigma}\neq\emptyset$. For the theory of semisimple Lie groups the reader can consult \cite{DuKoVa}, \cite{He}, \cite{Wa}.

Let $G$ be a connected non-compact semisimple Lie group $G$ with finite center and Lie algebra $\mathfrak{g}$. Fix a Cartan involution $\zeta:\mathfrak{g}\rightarrow \mathfrak{g}$ with associated Cartan decomposition $\mathfrak{g}=\mathfrak{k}\oplus \mathfrak{s}$ and let $\mathfrak{a}\subset \mathfrak{s}$ be a maximal abelian subalgebra and $\fa^+\subset \fa$ a Weyl chamber. Let us denote by $\Pi$, $\Pi^+$ and $\Pi^+:=-\Pi^+$ the {\bf set of roots}, the {\bf set of positive roots} and {\bf set of negative roots}, respectively, associated with $\fa^+$.
The {\bf Iwasawa decomposition} of the Lie algebra $\mathfrak{g}$, associated with the above choices, reads as 
\begin{equation*}
\mathfrak{g}=\mathfrak{k}\oplus \mathfrak{a}\oplus \mathfrak{n}^{\pm }\;\;\text{
where }\;\;\mathfrak{n}^{\pm }:=\sum_{\alpha \text{ }\in \text{ }\Pi ^{\pm }}%
\mathfrak{g}_{\alpha }.
\end{equation*}
and $\fg_{\alpha}:=\{X\in\fg, \;\;[H, X]=\alpha(H)X, \;\;\mbox{ for all }\;H\in\fa\}.$ Let $K$, $A$ and $N^{\pm }$ be the connected Lie subgroups of $G$ with Lie algebras $\mathfrak{k}$, $\mathfrak{a}$ and $\mathfrak{n}^{\pm }$,
respectively. The Iwasawa decomposition of the Lie group $G$ is given by $G=KAN^{\pm }$. If we denote by $M$ the centralizer of $\mathfrak{a}$ in $K$ it follows that $M$ is compact and $\sigma=AN^+MN^-$ is an open dense submanifold of $G$ (see \cite{He}, Chapter IX, Corollary 1.8).

Let us now consider a linear system $\Sigma$ on $G$ and let us denote by $\DC$ the derivation associated with the drift $\XC$. Since $\mathfrak{g}$ is semisimple the derivation $\DC$ is inner, that is, there exists $X\in \mathfrak{g}$ such that $\mathcal{D}$ = $\ad(X)$. Following \cite{He} Chapter IX Paragraph 7, there exists an Iwasawa decomposition $\mathfrak{g}=\fk\oplus \mathfrak{a}\oplus \mathfrak{n}^+$ and commuting elements $E\in \mathfrak{k}$, $H\in \fa$ and $N\in \mathfrak{n}^+$ such that $X=E+H+N$. Since $K$ is compact and $N^+$ nilpotent, $\ad(E)$ has only pure imaginary eigenvalues and $\ad(N)$ is nilpotent, implying that the real part of the eigenvalues of $\DC$ coincides with the eigenvalues of $\ad(H)$.

According to the previous analysis, we are able to prove the following

\begin{theorem}
\label{semisimple}
Let $G$ be a noncompact semisimple connected Lie group with finite center. Under the assumption that $\AC$ is an open set, it turns out that
$$\Sigma \mbox{ is controllable if and only if } \inner\mathcal{S}_{\Sigma}\neq\emptyset.$$
\end{theorem}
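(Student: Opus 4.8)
The plan is to treat the two implications separately, the forward one being immediate and all the content sitting in the converse. If $\Sigma$ is controllable, then by Theorem \ref{equivalence} and the following remark $\SC_{\Sigma}=G$, so $\inner\SC_{\Sigma}=G\neq\emptyset$. For the converse I assume $\inner\SC_{\Sigma}\neq\emptyset$ and must deduce $\SC_{\Sigma}=G$. First I would dispose of the degenerate case $\fg^{-}=\{0\}$: then $\fg=\fg^{+,0}$, whence $G=G^{+,0}\subseteq\SC_{\Sigma}$ and we are done. So assume $\fg^{-}\neq\{0\}$. Since $\fg$ is semisimple, $\fg^{+,0}=\fg^{0}\oplus\fg^{+}$ is exactly the parabolic subalgebra attached to the $\ad(H)$-grading and $\fg^{-,0}$ is its opposite; I write $\F=G/G^{+,0}$ for the flag manifold, $\pi:G\to\F$ the projection, $b_0=\pi(e)$ and $b_0^{-}$ the point stabilized by $G^{-,0}$, recalling that the two big cells $G^{-}b_0$ and $G^{+,0}b_0^{-}$ are open dense in $\F$.

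Next I would record two soft consequences of $\inner\SC_{\Sigma}\neq\emptyset$. Since $\AC$ is open, Theorem \ref{reachable} together with the $\varphi$-invariance from Proposition \ref{semigroup} gives $G^{+,0}\subseteq\SC_{\Sigma}$. As $\inner\SC_{\Sigma}$ is a two-sided ideal of the semigroup, I pick an interior point lying in the open dense big cell, write it as $n_0q_0$ with $n_0\in G^{-}$, $q_0\in G^{+,0}$, and right-translate $\inner\SC_{\Sigma}$ by $q_0^{-1}\in\SC_{\Sigma}$ to produce a set $V_{-}$, open in $G^{-}$, with $V_{-}\subseteq\inner\SC_{\Sigma}$. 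Because $\fg^{-}$ carries eigenvalues of negative real part, $\varphi_t$ contracts $G^{-}$ to $e$ as $t\to+\infty$; since $\varphi_t(V_{-})\subseteq\inner\SC_{\Sigma}$ by invariance, this yields $e\in\cl(\inner\SC_{\Sigma})$, and the ideal property upgrades this to $\SC_{\Sigma}\subseteq\cl(\inner\SC_{\Sigma})$.

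The hard part will be showing that the invariant control set of $\SC_{\Sigma}$ on $\F$ is everything. Since $\inner\SC_{\Sigma}\neq\emptyset$, $\SC_{\Sigma}$ has a unique invariant control set $C\subseteq\F$, closed, with nonempty interior and positively invariant ($\SC_{\Sigma}C\subseteq C$), by San Martin's theory of semigroups acting on flag manifolds. From $G^{+,0}\subseteq\SC_{\Sigma}$ the set $C$ is $G^{+,0}$-invariant, and since $\varphi_t$ is conjugation by $\exp(tX)\in G$ while $\SC_{\Sigma}$ is $\varphi$-invariant, uniqueness forces $\varphi_t(C)=C$. Using the preamble, the real parts of the eigenvalues of $\DC=\ad(X)$ coincide with those of $\ad(H)$, so on $\F$ the drift flow has $b_0$ as a hyperbolic sink and $b_0^{-}$ as a hyperbolic source whose unstable manifold is the open dense cell $G^{+,0}b_0^{-}$. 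Intersecting that open dense cell with the open set $\inner C$ and flowing backwards, the orbit stays in $C$ by invariance and converges to $b_0^{-}$, so closedness gives $b_0^{-}\in C$. Positive invariance then yields $G^{+,0}b_0^{-}\subseteq C$, an open dense subset of $\F$, and since $C$ is closed we conclude $C=\F$.

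Finally I would convert $C=\F$ into $\SC_{\Sigma}=G$. The equality $C=\F$ means $\cl(\SC_{\Sigma}b_0)=\F$, i.e. $\pi(\SC_{\Sigma})$ is dense; as $\SC_{\Sigma}=\SC_{\Sigma}G^{+,0}=\pi^{-1}(\pi(\SC_{\Sigma}))$, this forces $\cl(\SC_{\Sigma})=G$. Combining with the previous paragraph, $U:=\inner\SC_{\Sigma}$ is open and dense, so for any $g\in G$ the two dense open sets $U$ and $gU^{-1}$ meet (Baire), giving $g\in U\cdot U\subseteq U$; hence $\inner\SC_{\Sigma}=G$ and $\SC_{\Sigma}=G$, that is, $\Sigma$ is controllable. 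I expect the genuine obstacle to be the middle step: identifying $\fg^{+,0}$ as a parabolic, transporting the $\varphi$-invariant semigroup to $\F$ so as to invoke existence, uniqueness and positive invariance of the invariant control set, and in particular verifying that the full drift flow keeps $b_0^{-}$ a source with open dense unstable manifold despite the elliptic and nilpotent summands $E,N$ of $X$. The surrounding arguments — the ideal property, the contraction to $e$, and the Baire intersection — are routine by comparison.
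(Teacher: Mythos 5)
Your overall strategy (work on the flag manifold attached to the parabolic $\fg^{+,0}$, use uniqueness and $\varphi$-invariance of the invariant control set together with the source--sink dynamics of the drift) is a legitimate alternative to the paper's argument, and the step you flag as the ``genuine obstacle'' is actually fine: since $E$ and $N$ centralize $H$ they lie in $\fg^{0}$, hence fix both $b_0$ and $b_0^{-}$ and do not change the real parts of the linearization, so $b_0$ and $b_0^{-}$ really are a hyperbolic sink and source. The genuine gap is elsewhere: you systematically conflate the \emph{connected} subgroup $G^{+,0}$ with the full parabolic subgroup $P=N_G(\fg^{+,0})$, whose identity component is $G^{+,0}$ but which is in general disconnected (the discrepancy is governed by the finite group $M/M_0$). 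The statements ``the two big cells $G^{-}b_0$ and $G^{+,0}b_0^{-}$ are open dense in $\F=G/G^{+,0}$'' and ``$\pi^{-1}(\pi(\SC_\Sigma))=\SC_\Sigma$'' are true for $G/P$ but false for $G/G^{+,0}$: in $G=\Sl(2,\R)$ with $\fg^{+,0}$ the upper triangular Borel, $G^{+,0}=AN^{+}$, $G/G^{+,0}\cong S^1$ is the circle of rays, and $G^{-}b_0$ is an open half-circle, not dense; likewise uniqueness of the invariant control set can fail on this cover (the nonnegative-matrix semigroup has two invariant control sets on $S^1$). If you repair this by working on $G/P$, then the cells are dense and the control set is unique, but now the two steps that require membership in $\SC_\Sigma$ break: you may only translate by elements of $G^{+,0}$, not of $P$, so neither the extraction of $V_{-}$ (the point of the big cell is $n_0 m q_0$ with $m\in M$, and $m^{-1}$ need not lie in $\SC_\Sigma$) nor the final descent from $\cl(\SC_\Sigma P)=G$ to $\cl(\SC_\Sigma)=G$ goes through. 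This is not a removable technicality -- it is exactly the obstruction the paper's proof is organized around.

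The paper resolves it as follows: from $AN^{+}\subset G^{+,0}\subset\SC_\Sigma$ and the density of $AN^{+}MN^{-}$ one gets $\inner\SC_\Sigma\cap MN^{-}\neq\emptyset$; the set $L=\{m\in M:\exists n\in N^{-},\ mn\in\inner\SC_\Sigma\}$ is a subsemigroup of $M$ with nonempty interior because $M$ normalizes $N^{-}$, and compactness of $M$ forces $M_0\subset L$, hence $e\in L$ and $\inner\SC_\Sigma\cap N^{-}\neq\emptyset$; then Lemma 4.1 of \cite{SM} (a semigroup of $G$ with a nilpotent element in its interior is all of $G$) finishes. Note that your own step producing $V_{-}\subset G^{-}\cap\inner\SC_\Sigma$, had it been valid, would already have produced such a nilpotent interior element, so your long flag-manifold analysis is essentially a re-derivation of that cited lemma. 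To salvage your route you must either insert the paper's $M$-compactness argument to pass from $P$ to $G^{+,0}$, or prove uniqueness and cell-density statements on the finite cover $G/G^{+,0}$, neither of which your proposal supplies.
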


\begin{proof}
If $\Sigma$ is controllable then by Theorem \ref{equivalence} $\SC_{\Sigma}=G$ and so $\inner\SC_{\Sigma}$ is certainly nonempty. Reciprocally, let us assume that $\inner\SC_{\Sigma}\neq\emptyset$. By the previous analysis $\ad(H)$ restricted to $\fa\oplus\fn^+$ has only nonnegative eigenvalues, therefore we have that $\fa\oplus\fn^+\subset\fg^{+, 0}$ and consequently $AN^+\subset G^{+, 0}$. Moreover, since $\sigma=AN^+MN^-$ is a dense subset of $G$ and $G^{+, 0}\subset\SC_{\Sigma}$ we have that 
$$\inner\SC_{\Sigma}\cap MN^-\neq\emptyset.$$
Define
$$L:=\{m\in M; \exists n\in N^-\;\mbox{ with } mn\in \inner\SC_{\Sigma}\}.$$
The subset $L$ is nonempty and since $M$ normalizes $N^-$ it is a semigroup of $M$ with nonempty interior. Being that $M$ is compact we must have $M_0\subset L$, where $M_0$ stands for the connected component of the identity of $M$. Therefore, $\inner\SC_{\Sigma}\cap N^-\neq \emptyset$. However, Lemma 4.1 of \cite{SM} asserts that the only semigroup of $G$ that contains a nilpotent element in its interior is $G$ itself. Thus $\SC_{\Sigma}=G$ concluding the proof.
\end{proof}

The next result gives us another controllability characterization.

\begin{proposition}
Under the hypothesis of Theorem \ref{semisimple} it holds that $\inner\mathcal{S}_{\Sigma}\neq\emptyset$ if and only if there exists a $%
\varphi$-invariant open set $U\subset\mathcal{A}$.
\end{proposition}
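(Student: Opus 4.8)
The plan is to reduce both implications to the characterization of $\SC_\Sigma$ as the greatest $\varphi$-invariant subset of $\AC$ (Proposition \ref{semigroup}, item 1), so that the semigroup machinery does essentially all the work and the noncompact semisimple setting is needed only to keep the statement within the hypotheses of Theorem \ref{semisimple}.

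For the forward implication, I would assume $\inner\SC_\Sigma\neq\emptyset$ and simply take $U:=\inner\SC_\Sigma$ as the candidate. It is open by construction and nonempty by hypothesis. Since $\SC_\Sigma\subset\AC$ (the factor $t=0$, i.e.\ $\varphi_0=\id$, appears in the defining intersection $\bigcap_{t\in\R}\varphi_t(\AC)$), we get $U\subset\AC$. The only point requiring verification is the $\varphi$-invariance of $U$: because each $\varphi_t$ is an automorphism of $G$, hence a homeomorphism, it preserves interiors, so $\varphi_t(\inner\SC_\Sigma)=\inner(\varphi_t(\SC_\Sigma))=\inner\SC_\Sigma$, the last equality being the $\varphi$-invariance of $\SC_\Sigma$ already established in Proposition \ref{semigroup}. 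Thus $U$ is the desired $\varphi$-invariant open subset of $\AC$.

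For the reverse implication, suppose $U\subset\AC$ is a nonempty $\varphi$-invariant open set. Then $U$ is in particular a $\varphi$-invariant subset of $\AC$, so by the maximality in Proposition \ref{semigroup}, item 1, we have $U\subset\SC_\Sigma$. As $U$ is open, this forces $U\subset\inner\SC_\Sigma$, and since $U\neq\emptyset$ we conclude $\inner\SC_\Sigma\neq\emptyset$.

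The argument is short, and the only genuine obstacle is a bookkeeping one: one must insist that $U$ be nonempty (otherwise the equivalence fails trivially, since $\emptyset$ is itself a $\varphi$-invariant open subset of $\AC$), and one must record that the interior of a $\varphi$-invariant set is again $\varphi$-invariant, which uses only that each $\varphi_t$ is a homeomorphism. Notably, neither openness of $\AC$ nor semisimplicity is essential for this particular statement; if one prefers a proof leaning on the ambient hypotheses, the forward direction can instead be obtained by invoking Theorem \ref{semisimple} to conclude that $\Sigma$ is controllable and hence $\SC_\Sigma=G$, so that any nonempty open $U\subset G=\AC$ works.
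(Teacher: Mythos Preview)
Your proof is correct and follows essentially the same route as the paper: for the forward implication you take $U=\inner\SC_{\Sigma}$, and for the reverse you use the maximality of $\SC_{\Sigma}$ among $\varphi$-invariant subsets of $\AC$ (Proposition \ref{semigroup}, item 1). Your version is in fact more careful, since you explicitly verify that $\inner\SC_{\Sigma}$ is $\varphi$-invariant (using that each $\varphi_t$ is a homeomorphism) and you flag the implicit nonemptiness assumption on $U$, whereas the paper simply asserts that $U=\inner\SC_{\Sigma}$ ``satisfies the above condition.''
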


\begin{proof}
In fact, if $\inner\mathcal{S}_{\Sigma}\neq\emptyset$ then $U=\inner\mathcal{S}_{\Sigma}$ satisfies the above condition. Reciprocally, if there exists a $\varphi$-invariant open set $U$ contained in $\AC$ we must have by item 1. of
Proposition \ref{semigroup} that $U\subset\mathcal{S}_{\Sigma}$ and
consequently $\inner\mathcal{S}_{\Sigma}\neq\emptyset$ as desired.
\end{proof}

\begin{remark}
We should notice that the open set $U\subset \mathcal{A}$ contains the identity if and only if $\SC_{\Sigma}=G$.
\end{remark}

\end{document}